\newtheorem{theorem}{Theorem}[section]
\newtheorem{lemma}[theorem]{Lemma}
\newtheorem{Conjecture}[theorem]{Conjecture}
\theoremstyle{definition}
\newtheorem{definition}[theorem]{Definition}
\newtheorem{example}[theorem]{Example}
\newtheorem{remark}[theorem]{Remark}
\numberwithin{equation}{section}
\title{\LARGE{
Regularity of solutions for singular fractional differential equation}}
\begin{document}

\author{%
  Jinsil Lee${}^{a}$ and Yong-Hoon Lee${}^{b,1}$   \
       \\[1ex]
     {\small\itshape ${}^{a}$ Department of Mathematics, University of Georgia,} \\  
     {\small\itshape Athens, GA 30602, USA}\\ 
    {\small\itshape ${}^{b}$ Department of Mathematics, Pusan National University,} \\
    {\small\itshape Busan 46241, Republic of Korea}\\
     {\small\upshape E-mail: jl74942@uga.edu}\\
    {\small\upshape E-mail: yhlee@pusan.ac.kr}
    }

\footnotetext[1]{Corresponding Author}

\date{}
\maketitle

\begin{abstract}
In this work, we study the regularity of positive solutions for nonlinear fractional differential equation with a singular weight. We define the new Banach space and use this space to show the regularity. We also give an example with a singular weight which may not be in $L^1.$

{\it MSC (2010):} 34B15, 34B18, 34B27

{\it Keywords:} fractional differential equation, existence, positive solution,  singular weight

\end{abstract}
\section{Introduction}
Many researchers study problems in physics, control theory, and chemistry, which can be represented as fractional differential equations (\cite{A, B}).
We are interested in the regularity of solutions from the following equations with a singular weight:
\begin{equation*}\tag*{$(FDE)$}\label{FDE}
\begin{cases}
 D^{\alpha}_{0+}u(t)+h(t)f(u(t))= 0,\quad t\in (0,1),\\
u(0)= 0 = u(1),
\end{cases}
\end{equation*}
where $D^{\alpha}_{0+}$ is the Riemann-Liouville fractional derivative of order $\alpha \in (1,2]$, 
$f\in C([0,\infty),[0,\infty)) $ is a given continuous function and $h\in C((0,1],[0,\infty))$ satisfies the following conditions: 
\begin{flushleft}
$(H) \ \ \int_0^1 s^{\alpha -1}h(s)ds<\infty$,
\end{flushleft}
We notice that coefficient function $h$ satisfying condition $(H)$ may not be integrable near $t=0$, as an example, we may consider $h(t)=t^{-\beta}$ where $1<\beta <\alpha$. We see that $h$ satisfies conditions $(H)$ but $h\notin L^{1}((0,1),[0,\infty))$.
  
Introducing the Green's function for the case that $h$ is continuous,
Bai and L\"u \cite{E} consider the following nonlinear problem 
\begin{align}\label{eq:::1.1}
\begin{cases}
 D^{\alpha}_{0+}u(t)+f(t,u(t))= 0,\quad t\in (0,1),\\
u(0)= 0 = u(1),
\end{cases}
\end{align}
 where 
$f\in C([0,1]\times[0,\infty), [0,\infty)$). By taking the Riemann-Liouville fractional integral, they set up an equivalent solution operator $S$ by
\begin{align}
Su(t)= \int_{0}^{1} G(t,s)f(s,u(s))ds
\end{align}
where $G(t,s)$ defined by
\begin{align}\label{eq:::1.2}
G(t,s)=
\begin{cases}\displaystyle
 \frac{(t(1-s))^{\alpha-1}-(t-s)^{\alpha-1}}{\Gamma(\alpha)}, \quad 0\leq s\leq t\leq 1,\\\displaystyle
 \frac{(t(1-s))^{\alpha-1}}{\Gamma(\alpha)}, \quad 0\leq t\leq s\leq 1
\end{cases}
\end{align}
is the Green's function for the fractional differential equation
$$ D^{\alpha}_{0+}u(t)= 0$$
with Dirichlet boundary condition. Analysing this operator, they proved the existence of at least three positive solution of problem \eqref{eq:::1.1} in $C[0,1]$ under some additional conditions on $f$. 

However, if $h$ is not integrable, we should consider the existence of $D^\alpha_{0+}u$ and its solution space. Moreover, corresponding Green's function can not be obtained by obvious modification from the case $h\in L^1 .$ In the paper \cite{N}, the researchers considered the existence of the solution for the second order differential equation where the function $f$ is a given function satisfying Caratheodory's conditions with singularities at 0 and 1. They introduce the new Banach space $X=\{x\in C^1(0,1)| x\in C[0,1], \lim_{t\rightarrow 1}(1-t)x'(t) ~\text{and}~ \lim_{t\rightarrow 0}tx'(t) ~\text{exist}\}$ equipped with the norm $$\|x\|_X=\max_{t\in[0,1]}|x(t)|+\max_{t\in[0,1]}|t(1-t)x'(t)|$$ We are interested in extending the existence results to the fractional case. 
In our paper, we define the solution space $E_\alpha$ and define the solution of our equation using this solution space and derive the Green's function in this singular situation which is one of our main goals for this paper. In our case, the solution $u$ may not be in $AC^2[0,1]$ so that we understand a solution $u$ is in $E_\alpha \cap AC[0,1] $ with $D^{\alpha-1}_{0+} u(t)$ which is absolutely continuous in any compact subinterval of (0,1) and $u$ satisfies the equation $(FDE_1)$ for $t\in [0,1]$ and boundary conditions.

The rest of the paper is organized as follows; In Section 2, we introduce some definitions and lemmas related to fractional calculus and Krasnoselski's classical fixed point theorem. Moreover, we introduce new Banach spaces as our solution spaces.
In Section 3, we derive the Green's function related to the problem with a singular weight and define a solution of our problem $\ref{FDE}$.

\section{Preliminaries}

In this section, we introduce some definitions of fractional calculus and some important lemmas, and a theorem that will be used later.
\begin{definition}{\upshape(\cite{Q}) }\label{def2}
The integral 
$$I^\alpha_{0+}u(t)=\frac{1}{\Gamma (\alpha)}\int_0^t \frac{u(s)}{(t-s)^{1-\alpha}}ds, ~t>0$$
where $\alpha >0$ is called the Riemann-Liouville fractional integral of order $\alpha$.
\end{definition}
\begin{definition}{\upshape(\cite{Q})}
For a function $u(t)$ given in the interval $[0,\infty )$, the expression 
$$D^\alpha_{0+}u(t)=\frac{1}{\Gamma(n-\alpha)}\Big{(}\frac{d}{dt}\Big{)}^n \int_0^t \frac{u(s)}{(t-s)^{\alpha-n+1}}ds$$
where $n=[\alpha]+1, [\alpha]$ denotes the integer part of number $\alpha,$ is called the Riemann-Liouville fractional derivative of order $\alpha$.
\end{definition}
\begin{remark}{\upshape(\cite{Q})} \upshape\label{rmk2.3}
We note for $\lambda>-1$,
$$D^{\alpha}_{0+}t^{\lambda}=\frac{\Gamma(\lambda+1)}{\Gamma(\lambda-\alpha+1)}t^{\lambda-\alpha}.$$
giving in particular $D^{\alpha}_{0+}t^{\alpha-m}=0$, $m=1,2,\cdots,N$, 
where $N$ is the smallest integer greater than or equal to $\alpha$.
\end{remark}
\begin{definition}
We first introduce the basic Banach spaces 
\begin{itemize}
\item $AC[0,1]$ : the space of absolute continuous functions on $[0,1]$ 
\item $AC^k[0,1]$: the space of real-valued functions $f$ which have continuous derivatives up to order $k-1$ on $[0,1]$ such that $f^{(k-1)}\in AC[0,1].$  
\end{itemize}
In the book \cite{P}, the authors introduce a new Banach space
\begin{itemize}
\item $C^1_{\gamma}[a,b]= \{u\in C[a,b]: (t-a)^{\gamma} u'(t)\in C[0,1] \}$ with the norm $\|u\|_{C^1_{\gamma}}=\|u\|_\infty+\|u'\|_{C_{\gamma}}$ where $0<\gamma<1$,  $\|u\|_{\infty}=\max_{t\in[0,1]}|u(t)|$ and $\|u\|_{C_{\gamma}}=\max_{t\in[0,1]}|t^{\gamma}u(t)| $
\end{itemize}
Next, we define a new space $E_\alpha [0,1]$.
\begin{itemize}
\item $E_\alpha= \{u\in C[0,1] : t^{\alpha-1} D^{\alpha-1}_{0+}u(t) \in C[0,1] \}$ equipped with the norm $\|u\|_{E_\alpha}=\|u\|_{\infty}+\|u\|_1$ where $\|u\|_1=\max_{t\in[0,1]} |t^{\alpha-1} D^{\alpha-1}_{0+}u(t)|$.
\end{itemize}
\end{definition}
 Clearly, $\|\cdot \|_{E_\alpha}$ is a norm. Indeed, for any $u,v \in E_\alpha$ and $a\in \mathbb{R}^n$, we have by the linearity of $D^{\alpha-1}_{0+}$
$$\|au\|_{E_\alpha}=\|au\|_{\infty}+\|au\|_1=|a|(\|u\|_{\infty}+\|u\|_1)=|a|\|u\|_{E_\alpha}$$
and $$\|u+v\|_{E_\alpha}=\|u+v\|_{\infty}+\|u+v\|_1\le \|u\|_{E_\alpha}+\|v\|_{E_\alpha}.$$
In addition, for any $u\in E_\alpha ,$ if $0=\|u\|_{E_\alpha}=\|u\|_{\infty}+\|u\|_1,$ then $\|u\|_{\infty}=0$ and therefore $u=0.$
First of all, let us assume that $h$ is continuous. In \cite{E}, Bai and L\"{u} showed that there exists a solution in $C[0,1]$ using the fixed point theorem. Moreover, we can proved that all solutions for \ref{FDE} are of $C^1_{2-\alpha}[0,1]$ when $h\in C[0,1]$. See Appendix. Second, we assume that $h$ is in $C(0,1]$ and satisfies $(H)$. Then, we say that $u$ is a solution of \ref{FDE} if $u\in E_\alpha [0,1]\cap AC[0,1]$ and $u$ satisfies the equation \ref{FDE}. In our case, the solution $u\notin C^1_{2-\alpha}[0,1]$ but $u\in E_\alpha [0,1]$. Example \ref{ex3.2} shows that a given function $h_3$ corresponding to the solution $u_3$ satisfies our conditions $(H)$ and $u_3 \in  E_\alpha$ and $u_3\notin C^1_{2-\alpha}[0,1]$.
\begin{example}\label{ex3.2}
Consider three solutions $u_1(t)=t^{2.2}(1-t)$,$u_2(t)=t^{0.8}(1-t)$ and $u_3(t)=t^{0.2}(1-t)$ .
Using Remark \ref{rmk2.3}, we get 
$$D^{1.5}_{0+}u_1(t)=h_1(t):= \frac{\Gamma (3.2)}{\Gamma (2.7)}t^{0.7}-\frac{\Gamma (4.2)}{\Gamma (2.7)}t^{1.7}$$
$$D^{1.5}_{0+}u_2(t)=h_2(t):= \frac{\Gamma (1.8)}{\Gamma (0.3)}t^{-0.7}-\frac{\Gamma (2.8)}{\Gamma (1.3)}t^{0.3}$$
and $$D^{1.5}_{0+}u_3(t)=h_3(t):= \frac{\Gamma (1.2)}{\Gamma (-0.3)}t^{-1.3}-\frac{\Gamma (2.2)}{\Gamma (0.7)}t^{-0.3}$$
We also have $$(u_2)'(t)=0.8t^{-0.2}-1.8t^{0.8}$$
and $$(u_3)'(t)=0.2t^{-0.8}-1.2t^{0.2}$$
 Since $t^{0.5}(u_2)'(t)=0.8t^{0.3}-1.8t^{1.3}$, we can conclude that $u_2\in C^1_{0.5}[0,1]$ but $u_3\notin C^1_{0.5}[0,1]$. On the other hand, we have $t^{\alpha-1} D^{\alpha-1}_{0+}u_3(t) =t^{0.5} D^{0.5}_{0+}u_3(t) =t^{0.5}(\frac{\Gamma (1.2)}{\Gamma (0.7)}t^{-0.3}-\frac{\Gamma (2.2)}{\Gamma (1.7)}t^{0.7})\in C[0,1]$ and then $u_3\in E_\alpha$.
\end{example}
From this example, we can consider the proposition as follows:
\begin{Conjecture}\label{conjregu}
Let us consider the following equation
\begin{equation*}
\begin{cases}
 D^{\alpha}_{0+}u(t)+h(t)= 0,\quad t\in (0,1),\\
u(0)= 0 = u(1),
\end{cases}
\end{equation*}
where $D^{\alpha}_{0+}$ is the Riemann-Liouville fractional derivative of order $\alpha \in (1,2]$. If $h(t) \in C[0,1]$ or $L^1(0,1)$, the solution $u$ would be in $C^1_{2-\alpha}[0,1]$. Moreover, if $h(t) \in C(0,1]$ satisfying $(H)$, the solution $u$ would be in $E_\alpha [0,1]$.
\end{Conjecture}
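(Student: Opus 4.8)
The plan is to realize the solution through the Green's function $G$ of \eqref{eq:::1.2} and then read off the regularity in each case by a computation that respects the possible non-integrability of $h$. I would set $u(t)=\int_0^1 G(t,s)h(s)\,ds$ and first check this is well defined under $(H)$. The key observation is that near $s=0$ the kernel cancels: expanding $t^{\alpha-1}(1-s)^{\alpha-1}-(t-s)^{\alpha-1}$ shows $G(t,s)=O(s)$ as $s\to 0^+$ for each fixed $t\in(0,1)$, rather than $O(1)$. Since $0<\alpha-1\le 1$ gives $s\le s^{\alpha-1}$ on $(0,1)$, condition $(H)$ yields $\int_0^1 s\,h(s)\,ds\le\int_0^1 s^{\alpha-1}h(s)\,ds<\infty$, so $G(\cdot,s)h(s)$ is integrable, $u$ is well defined, and $G(0,s)=G(1,s)=0$ give the boundary values once continuity at the endpoints is established.

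For the first assertion ($h\in C[0,1]$ or $h\in L^1$) I would differentiate under the integral. Writing $u(t)=\frac{t^{\alpha-1}}{\Gamma(\alpha)}\int_0^1(1-s)^{\alpha-1}h(s)\,ds-I^\alpha_{0+}h(t)$ (legitimate since now $h\in L^1$) and using that the boundary term in $\frac{d}{dt}I^\alpha_{0+}h$ vanishes for $\alpha>1$, the substitution $s=t\tau$ gives
$$t^{2-\alpha}u'(t)=\frac{\alpha-1}{\Gamma(\alpha)}\left[\int_0^1(1-s)^{\alpha-1}h(s)\,ds-t\int_0^1(1-\tau)^{\alpha-2}h(t\tau)\,d\tau\right].$$
Because $\alpha-2>-1$ the $\tau$-integral is controlled (by $\|h\|_\infty/(\alpha-1)$ when $h$ is continuous), so the second term is continuous with limit $0$ as $t\to 0^+$; hence $t^{2-\alpha}u'\in C[0,1]$ and $u\in C^1_{2-\alpha}[0,1]$. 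The continuous case is the Appendix computation, and the $L^1$ case follows from the same identity by approximating $h$ by continuous functions in $L^1$ together with dominated convergence.

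For the second, singular assertion the naive splitting of $u$ into two terms fails, since both $\frac{t^{\alpha-1}}{\Gamma(\alpha)}\int_0^1(1-s)^{\alpha-1}h\,ds$ and $I^\alpha_{0+}h$ may diverge; only their combination converges. I would therefore compute $D^{\alpha-1}_{0+}u=\frac{d}{dt}I^{2-\alpha}_{0+}u$ by integrating the kernel first. Using $I^{2-\alpha}_{0+}t^{\alpha-1}=\Gamma(\alpha)t$ and the Beta-integral identity $I^{2-\alpha}_{0+}(\cdot-s)^{\alpha-1}_+=\Gamma(\alpha)(t-s)_+$ (where $(x)_+=\max(x,0)$), one finds $I^{2-\alpha}_t G(\cdot,s)(t)=(1-s)^{\alpha-1}t-(t-s)_+$, whence by Fubini $I^{2-\alpha}_{0+}u(t)=\int_0^1\big[(1-s)^{\alpha-1}t-(t-s)_+\big]h(s)\,ds$, the kernel again being $O(s)$ near $s=0$ so the integral converges under $(H)$. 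Differentiating under the integral — justified at each interior $t$ by dominating the difference quotient of the kernel by a constant multiple of $s\,h(s)$, which is integrable — yields the clean formula
$$D^{\alpha-1}_{0+}u(t)=\int_0^t\big[(1-s)^{\alpha-1}-1\big]h(s)\,ds+\int_t^1(1-s)^{\alpha-1}h(s)\,ds.$$

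It then remains to show $t^{\alpha-1}D^{\alpha-1}_{0+}u(t)\in C[0,1]$. Continuity on $(0,1]$ is immediate from the displayed formula and continuity of $h$ there, so the crux is the limit at $0$. The first integral is $O\!\big(\int_0^t s\,h(s)\,ds\big)=o(1)$ and is further damped by $t^{\alpha-1}$; for the second I would use $(1-s)^{\alpha-1}\le 1$ and the elementary bound $t^{\alpha-1}\le s^{\alpha-1}$ for $s\ge t$ to obtain $t^{\alpha-1}\int_t^1(1-s)^{\alpha-1}h(s)\,ds\le\int_t^1 s^{\alpha-1}h(s)\,ds$, and then split at a small $\delta$: the tail $\int_0^\delta s^{\alpha-1}h$ is small by $(H)$, while on $[\delta,1]$ the factor $t^{\alpha-1}\to 0$ with $h$ bounded. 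Hence $t^{\alpha-1}D^{\alpha-1}_{0+}u(t)\to 0$ and $u\in E_\alpha$. The main obstacle, and the exact place where $(H)$ is used decisively, is this interplay: legitimizing the interchange of $D^{\alpha-1}_{0+}$ with the $s$-integration when $h\notin L^1$ (which works only because both $G$ and its $I^{2-\alpha}$-transform vanish linearly at $s=0$), and then balancing the blow-up rate of $\int_t^1 h$ against the weight $t^{\alpha-1}$ — a balance that holds precisely because $(H)$ forces the singularity of $h$ to be weaker than $s^{-\alpha}$.
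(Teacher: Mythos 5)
Your treatment of the continuous case and of the singular case $(H)$ is correct and, in substance, the same as the paper's. For $h\in C(0,1]$ satisfying $(H)$ you compute $I^{2-\alpha}_{0+}u$ by Fubini, using $I^{2-\alpha}_{0+}(\cdot-s)_+^{\alpha-1}=\Gamma(\alpha)(t-s)_+$, and then differentiate to reach
\begin{equation*}
D^{\alpha-1}_{0+}u(t)=\int_0^t\big[(1-s)^{\alpha-1}-1\big]h(s)\,ds+\int_t^1(1-s)^{\alpha-1}h(s)\,ds,
\end{equation*}
which is exactly identity \eqref{Dalpham1} inside the paper's proof of Theorem \ref{lemma3.2}; the paper arrives at it by the same Fubini computation, justifying the differentiation via Lemma \ref{lm2.7} instead of your dominated-convergence argument. (One small imprecision on your side: the difference quotient of the kernel is dominated by $C\,s\,h(s)$ only for $s$ bounded away from $t$; near $s=t$ you need the trivial bound on the difference quotient together with local integrability of $h$ there, i.e.\ a split at $t/2$ as the paper does.) Your endgame is actually cleaner than the paper's: the paper concludes $u\in E_\alpha$ by estimating $\int_0^1\big|[t^{\alpha-1}D^{\alpha-1}_{0+}u(t)]'\big|\,dt<\infty$ and invoking absolute continuity, a step that is delicate as stated, whereas your $\delta$-splitting argument proving $t^{\alpha-1}D^{\alpha-1}_{0+}u(t)\to 0$ as $t\to 0^+$ establishes the required continuity directly.

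The genuine gap is the clause $h\in L^1(0,1)$. Approximating $h$ by continuous $h_n$ with $\|h_n-h\|_{L^1}\to 0$ cannot close the argument: $L^1$-convergence of the data does not yield uniform convergence of $t^{2-\alpha}u_n'$, and continuity is not preserved under the pointwise limits you would actually get. In fact the clause, as literally stated, is false. Take $\alpha\in(1,2)$ and $h(s)=|s-\tfrac12|^{-\beta}$ with $\alpha-1\le\beta<1$; then $h\in L^1(0,1)$, but in your own identity the term $t\int_0^1(1-\tau)^{\alpha-2}h(t\tau)\,d\tau=t^{2-\alpha}\int_0^t(t-s)^{\alpha-2}h(s)\,ds$ diverges at $t=\tfrac12$, since the integrand behaves like $(\tfrac12-s)^{\alpha-2-\beta}$ with $\alpha-2-\beta\le-1$; a Fatou argument on the difference quotient then shows $u$ has an infinite one-sided derivative at $t=\tfrac12$, so $t^{2-\alpha}u'\notin C[0,1]$ and $u\notin C^1_{2-\alpha}[0,1]$. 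This is precisely why the paper's own proof (the lemma in the Appendix) quietly strengthens the hypothesis to $h\in L^1(0,1)\cap C(0,1)$: interior continuity of $h$ lets one split $\int_0^t$ at $t/2$ and argue as in the singular case. Repair your $L^1$ step by adding that interior-continuity hypothesis (your identity-based proof then goes through verbatim, with no approximation needed), or note explicitly that the statement of Conjecture \ref{conjregu} requires this correction.
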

In order to prove this claim, we introduce several properties of fractional derivative.
\begin{lemma}{\upshape(\cite{E}) }\label{lem2.4}
Assume that $u\in C(0,1)\cap L(0,1)$. For $\alpha>0$, $D^{\alpha}_{0+} u(t)=0$ has a unique solution
$$u(t)=c_1 t^{\alpha-1}+c_2 t^{\alpha-2}+\cdots +c_n t^{\alpha-n}, \quad c_i \in \mathbb{R}, i=1,2,\cdots,n $$
 where n is the smallest integer greater than or equal to $\alpha$. 
\end{lemma}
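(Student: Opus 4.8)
The plan is to prove the two inclusions separately: first that every function of the stated form is annihilated by $D^{\alpha}_{0+}$, and then that every solution in $C(0,1)\cap L(0,1)$ must have this form. Since the powers $t^{\alpha-1},\dots,t^{\alpha-n}$ are linearly independent, this yields both that the listed family exhausts the solutions and that the coefficients $c_i$ in the representation are determined, which is the sense in which the ``solution is unique''.

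For the easy direction I would invoke Remark~\ref{rmk2.3} directly. Applying its formula with $\lambda=\alpha-m$ gives $D^{\alpha}_{0+}t^{\alpha-m}=\frac{\Gamma(\alpha-m+1)}{\Gamma(1-m)}\,t^{-m}$, where Remark~\ref{rmk2.3} applies because $m\le n$ forces $\alpha-m>-1$. Since $\Gamma$ has poles at the non-positive integers, $1/\Gamma(1-m)=0$ for each $m=1,\dots,n$, so each $t^{\alpha-m}$ is a solution and, by linearity of $D^{\alpha}_{0+}$, so is any combination $\sum_{m=1}^{n}c_m t^{\alpha-m}$. I would also record here that each such term lies in $C(0,1)\cap L(0,1)$: the borderline case is $t^{\alpha-n}$, whose exponent satisfies $\alpha-n\in(-1,0]$ and hence stays integrable on $(0,1)$.

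The substantive direction is completeness. The key structural fact is the factorization $D^{\alpha}_{0+}=\frac{d^n}{dt^n}\circ I^{n-\alpha}_{0+}$, read straight off the two definitions of Section~2. Setting $v:=I^{n-\alpha}_{0+}u$, the hypothesis $D^{\alpha}_{0+}u=0$ becomes $v^{(n)}=0$, so $v$ is a polynomial of degree at most $n-1$, say $v(t)=\sum_{k=0}^{n-1}a_k t^{k}$. To recover $u$ I would apply $D^{n-\alpha}_{0+}$ and use the left-inverse identity $D^{n-\alpha}_{0+}I^{n-\alpha}_{0+}u=u$, valid for $u\in L(0,1)$ (and itself a consequence of the semigroup law $I^{1-\beta}_{0+}I^{\beta}_{0+}=I^{1}_{0+}$ with $\beta=n-\alpha$, combined with the fundamental theorem of calculus). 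This gives $u=D^{n-\alpha}_{0+}v=\sum_{k=0}^{n-1}a_k D^{n-\alpha}_{0+}t^{k}$, and evaluating each term by Remark~\ref{rmk2.3} turns the monomial $t^{k}$ into a multiple of $t^{\alpha-(n-k)}$; reindexing $j=n-k$ produces exactly the claimed expression $\sum_{j=1}^{n}c_j t^{\alpha-j}$.

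The step I expect to require the most care is the inversion $D^{n-\alpha}_{0+}I^{n-\alpha}_{0+}u=u$ together with the implication $v^{(n)}=0\Rightarrow v$ is a polynomial, since both hinge on $u\in L(0,1)$ making $v=I^{n-\alpha}_{0+}u$ well defined and regular enough for the iterated derivative and the inversion to be meaningful. In a fully rigorous write-up I would cite the standard composition and inversion formulas for Riemann--Liouville operators from \cite{P,Q} rather than reprove them, and would note that because $v$ is a genuine polynomial of degree at most $n-1$ the sum terminates in exactly $n$ terms, matching the dimension of the solution space.
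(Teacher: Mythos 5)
Your proof is correct, but there is nothing in the paper to measure it against: the paper states this lemma purely as a citation to \cite{E} and supplies no proof of its own. Your argument is the standard one underlying that citation, and it is consistent with the toolkit the paper does provide. The sufficiency half is exactly the computation already recorded in Remark \ref{rmk2.3} (together with your correct checks that $\alpha-m>-1$ for $m\le n$ and that each $t^{\alpha-m}$ lies in $C(0,1)\cap L(0,1)$). The completeness half combines the factorization $D^{\alpha}_{0+}=\frac{d^n}{dt^n}\circ I^{n-\alpha}_{0+}$, read off the definitions, with the elementary fact that $v:=I^{n-\alpha}_{0+}u$ having $v^{(n)}\equiv 0$ on $(0,1)$ forces $v$ to be a polynomial of degree at most $n-1$, and with the left-inverse identity $D^{\beta}_{0+}I^{\beta}_{0+}u=u$; note that this is the same identity (used there at order $\alpha$) that the paper itself invokes without proof immediately after this lemma in order to deduce the next one, on $I^{\alpha}_{0+}D^{\alpha}_{0+}u$, so your proof and the paper's surrounding logic fit together rather than duplicating each other. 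Two details to spell out in a polished version, neither of which is a gap in the strategy: (i) the inversion $D^{n-\alpha}_{0+}I^{n-\alpha}_{0+}u=u$ obtained from the semigroup law and the fundamental theorem of calculus holds a priori only at Lebesgue points of $u$, so you should say explicitly that $u\in C(0,1)$ and the continuity of each $D^{n-\alpha}_{0+}t^{k}$ on $(0,1)$ upgrade the a.e. identity $u=\sum_{k}a_{k}D^{n-\alpha}_{0+}t^{k}$ to equality everywhere on $(0,1)$; (ii) in the degenerate case $\alpha=n\in\mathbb{N}$ one has $n-\alpha=0$, both $I^{0}_{0+}$ and $D^{0}_{0+}$ are the identity, and the argument collapses to the classical statement that $u^{(n)}=0$ implies $u$ is a polynomial of degree at most $n-1$, which is worth a separate sentence since the semigroup step reads trivially there.
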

As $D^{\alpha}_{0+}I^{\alpha}_{0+} u(t)=u(t)$ for all $u\in C(0,1)\cap L(0,1)$. From Lemma \ref{lem2.4}, we deduce the following statement.
\begin{lemma}{\upshape(\cite{E},\cite{O}) }
Assume that $u\in C(0,1)\cap L(0,1)$ with a fractional derivative of order $\alpha>0$ that belongs to $C(0,1)\cap L(0,1)$. Then
$$I^{\alpha}_{0+}D^{\alpha}_{0+} u(t)=u(t)+c_1 t^{\alpha-1}+c_2 t^{\alpha-2}+\cdots +c_n t^{\alpha-n}, \quad c_i \in \mathbb{R}, i=1,2,\cdots,n .$$
Moreover, if $0<\alpha <1$ and $u(t) \in C[0,1]$, then $D^{\alpha}_{0+} u(t) \in C(0,1)\cap L(0,1)$ and $$I^{\alpha}_{0+}D^{\alpha}_{0+} u(t)=u(t).$$
\end{lemma}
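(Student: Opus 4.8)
The plan is to treat the two assertions separately, establishing the general composition formula first and then specializing to $0<\alpha<1$. For the general identity, I would set $g:=D^{\alpha}_{0+}u$, which by hypothesis lies in $C(0,1)\cap L(0,1)$, and introduce
$$v:=I^{\alpha}_{0+}D^{\alpha}_{0+}u-u=I^{\alpha}_{0+}g-u.$$
Since $I^{\alpha}_{0+}g$ is well defined for $g\in L(0,1)$ and continuous on $(0,1)$, while $u\in C(0,1)\cap L(0,1)$, we have $v\in C(0,1)\cap L(0,1)$. Applying the left-inverse relation $D^{\alpha}_{0+}I^{\alpha}_{0+}=\mathrm{id}$ recorded just before the lemma, I would compute
$$D^{\alpha}_{0+}v=D^{\alpha}_{0+}I^{\alpha}_{0+}g-D^{\alpha}_{0+}u=g-g=0,$$
so that $v$ solves the homogeneous equation $D^{\alpha}_{0+}v=0$. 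Lemma \ref{lem2.4} then forces $v=c_1t^{\alpha-1}+\cdots+c_nt^{\alpha-n}$ with $c_i\in\mathbb{R}$, which is precisely the claimed formula. This step is purely structural and uses only the stated results.

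For the second assertion, assume $0<\alpha<1$ and $u\in C[0,1]$, so that $n=1$. The first task is to confirm $D^{\alpha}_{0+}u\in C(0,1)\cap L(0,1)$, so that the first part applies; writing $\Phi:=I^{1-\alpha}_{0+}u$ we have $D^{\alpha}_{0+}u=\Phi'$, and continuity on $(0,1)$ together with integrability follows from the standard mapping properties of the Riemann-Liouville integral on $C[0,1]$ (this is the point where I would invoke \cite{O}, or else reprove it via the substitution $s=t\tau$ in the defining integral). Granting this, the general identity reduces to
$$I^{\alpha}_{0+}D^{\alpha}_{0+}u(t)=u(t)+c_1t^{\alpha-1},$$
and it remains to show $c_1=0$.

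To pin down $c_1$, I would exploit the behavior at $t=0$. A direct estimate gives
$$|\Phi(t)|=\Big|I^{1-\alpha}_{0+}u(t)\Big|\le\frac{\|u\|_{\infty}}{\Gamma(1-\alpha)}\int_0^t(t-s)^{-\alpha}\,ds=\frac{\|u\|_{\infty}}{\Gamma(2-\alpha)}\,t^{1-\alpha}\xrightarrow[t\to0^+]{}0,$$
since $1-\alpha>0$; hence $\Phi(0^+)=0$. Applying $I^{1-\alpha}_{0+}$ to the displayed identity and using the semigroup law $I^{1-\alpha}_{0+}I^{\alpha}_{0+}=I^{1}_{0+}$ together with $I^{1-\alpha}_{0+}t^{\alpha-1}=\Gamma(\alpha)$ (the integral analogue of Remark \ref{rmk2.3}), and noting $I^{1}_{0+}D^{\alpha}_{0+}u(t)=\int_0^t\Phi'(s)\,ds=\Phi(t)-\Phi(0^+)=\Phi(t)$, a comparison of both sides yields $c_1\Gamma(\alpha)=0$, i.e. $c_1=0$ and $I^{\alpha}_{0+}D^{\alpha}_{0+}u=u$.

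I expect the main obstacle to be the regularity claim in the second part: for merely continuous $u$ one must justify that $\Phi=I^{1-\alpha}_{0+}u$ is absolutely continuous with $\Phi'\in L(0,1)$, so that $D^{\alpha}_{0+}u$ genuinely exists in $C(0,1)\cap L(0,1)$ and the use of the fundamental theorem of calculus $\int_0^t\Phi'=\Phi(t)-\Phi(0^+)$ is legitimate. The structural identity of the first part is routine, but this integrability and absolute-continuity step is delicate for continuous (non-H\"older) data, and is the place where genuine care or an explicit appeal to \cite{O} is needed.
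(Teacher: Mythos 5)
Your proposal is correct where a proof is actually possible, and it in fact does more than the paper does: the paper offers no real proof of this lemma at all. The paper's entire justification is the one sentence preceding the statement --- since $D^{\alpha}_{0+}I^{\alpha}_{0+}u=u$ on $C(0,1)\cap L(0,1)$, the difference $v=I^{\alpha}_{0+}D^{\alpha}_{0+}u-u$ is annihilated by $D^{\alpha}_{0+}$ and Lemma 2.5 (\texttt{lem2.4}) gives the power-function expansion --- while the ``Moreover'' clause is imported wholesale from \cite{O} with no argument. Your first part is exactly this deduction, with the worthwhile added check (which the paper glosses over) that $v\in C(0,1)\cap L(0,1)$ so that Lemma 2.5 is actually applicable. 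Your second part is genuinely new relative to the paper: the derivation of $c_1=0$ by applying $I^{1-\alpha}_{0+}$ to the identity, using the semigroup law, $I^{1-\alpha}_{0+}t^{\alpha-1}=\Gamma(\alpha)$, and the estimate $|\Phi(t)|\le \|u\|_{\infty}t^{1-\alpha}/\Gamma(2-\alpha)$ forcing $\Phi(0^+)=0$, is correct; the fundamental-theorem step $\int_0^t\Phi'=\Phi(t)-\Phi(0^+)$ is legitimate because once $\Phi$ is differentiable on $(0,1)$ with $\Phi'\in L(0,1)$, the classical theorem on everywhere-differentiable functions with integrable derivative gives the formula on $[\epsilon,t]$, and $\epsilon\to 0^+$ passes by continuity of $\Phi$ at $0$. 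Your closing caveat is also exactly right, and is worth stating more strongly: the claim that $D^{\alpha}_{0+}u$ exists and lies in $C(0,1)\cap L(0,1)$ for \emph{every} $u\in C[0,1]$ cannot be proved as stated --- it fails for continuous functions of H\"older exponent below $\alpha$ (Weierstrass-type examples), so $\Phi=I^{1-\alpha}_{0+}u$ need not be differentiable at all. This is a known soft spot in the cited literature that the paper inherits by citation; your proof correctly isolates it as the one step that must be assumed or taken from \cite{O}, and everything else in your argument is sound.
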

\begin{lemma}{\upshape(\cite{K}) }\label{lm2.7}
Let the functions $\varphi\in AC([a,b];\mathbb{R})$ and $f:[c,d]\times [a,b]\rightarrow \mathbb{R}$ be such that the following relations hold and $\varphi([a,b])=[c,d]$:
\begin{align*}
f(\cdot, x )\in L([c,d];\mathbb{R}) \quad for ~ all~ x\in [a,b], \\
f(t,\cdot )\in AC([a,b];\mathbb{R}) \quad for ~ a.e.~ t\in [c,d], \\
\end{align*}
and $$f'_{[2]} \in L([c,d]\times [a,b];\mathbb{R}).$$
Put 
$$F(\lambda):=\int^{\varphi(\lambda)}_c f(t,\lambda) dt ~for ~\lambda \in [a,b].$$
Then, the following assertions are satisfied:
\begin{itemize}
\item[(a)] There exist sets $E_1\subseteq [c,d]$ and $E_2\subseteq [a,b]$ such that $meas E_1=d-c, meas E_2=b-a$, and $$F'(\lambda)=f(\varphi(\lambda),\lambda)\varphi '(\lambda)+\int^{\varphi (\lambda)}_c f'_{[2]}(t,\lambda)dt ~for ~a.e. ~\lambda \in \varphi^{-1}(E_1)\cap E_2.$$
\item[(b)] If the function $\varphi$ is monotone (not strictly, in general) then the function $F$ is absolutely continuous on the interval $[a,b]$.
\item[(c)] If the function $\varphi$ is strictly monotone then
$$F'(\lambda)=f(\varphi (\lambda),\lambda)\varphi '(\lambda)+\int^{\varphi (\lambda)}_c f'_{[2]}(t,\lambda)dt ~for ~a.e. \lambda \in [a,b].$$
\end{itemize}
\end{lemma}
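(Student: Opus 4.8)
\emph{The plan.} The statement is a Leibniz-type differentiation rule for the parameter integral $F(\lambda)=\int_c^{\varphi(\lambda)}f(t,\lambda)\,dt$, in which both the upper limit and the integrand depend on $\lambda$, yet only through weak regularity ($\varphi\in AC$, $f(t,\cdot)\in AC$ for a.e.\ $t$, $f'_{[2]}\in L$). The guiding idea is to isolate the two sources of variation and then recombine them by a chain-rule argument. Concretely, I would introduce the fixed-limit primitive $\Psi(\mu,\lambda):=\int_c^{\mu}f(t,\lambda)\,dt$, so that $F(\lambda)=\Psi(\varphi(\lambda),\lambda)$, and aim to establish $\partial_\mu\Psi(\mu,\lambda)=f(\mu,\lambda)$ at Lebesgue points $\mu$ of the slice $f(\cdot,\lambda)$, and $\partial_\lambda\Psi(\mu,\lambda)=\int_c^{\mu}f'_{[2]}(t,\lambda)\,dt$ for a.e.\ $\lambda$ (ordinary differentiation under the integral sign). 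The formula of the lemma is then the formal chain rule $F'=\partial_\mu\Psi\cdot\varphi'+\partial_\lambda\Psi$, and the work is to justify it under the stated weak hypotheses.

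\emph{Step 1: differentiation under the integral sign.} Using the absolute continuity of $f(t,\cdot)$ I would write $f(t,\lambda)=f(t,a)+\int_a^{\lambda}f'_{[2]}(t,\sigma)\,d\sigma$ for a.e.\ $t$, and then apply Fubini's theorem (legitimate since $f'_{[2]}\in L([c,d]\times[a,b])$) to obtain
\[
\Psi(\mu,\lambda)=\int_c^{\mu}f(t,a)\,dt+\int_a^{\lambda}\Big(\int_c^{\mu}f'_{[2]}(t,\sigma)\,dt\Big)d\sigma .
\]
Since $\sigma\mapsto\int_c^{\mu}f'_{[2]}(t,\sigma)\,dt$ is integrable, $\lambda\mapsto\Psi(\mu,\lambda)$ is absolutely continuous with $\partial_\lambda\Psi(\mu,\lambda)=\int_c^{\mu}f'_{[2]}(t,\lambda)\,dt$ for a.e.\ $\lambda$, while $\partial_\mu\Psi(\mu,\lambda)=f(\mu,\lambda)$ at every Lebesgue point $\mu$ of $f(\cdot,\lambda)$. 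This fixes $E_2$ as the full-measure set of $\lambda$ at which $\varphi'(\lambda)$ exists and the parameter-derivative identity holds, and $E_1$ as a full-measure set of points of $[c,d]$ serving as Lebesgue points of the relevant slices.

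\emph{Step 2: the difference quotient (part (a)).} For $\lambda\in E_2$ I would study
\[
\frac{F(\lambda+\eta)-F(\lambda)}{\eta}=\frac1\eta\!\int_{\varphi(\lambda)}^{\varphi(\lambda+\eta)}\!\! f(t,\lambda+\eta)\,dt+\frac{\Psi(\varphi(\lambda),\lambda+\eta)-\Psi(\varphi(\lambda),\lambda)}{\eta}.
\]
The second term converges to $\int_c^{\varphi(\lambda)}f'_{[2]}(t,\lambda)\,dt$ by Step 1. For the first (``boundary'') term I would split $f(t,\lambda+\eta)=f(t,\lambda)+\int_\lambda^{\lambda+\eta}f'_{[2]}(t,\sigma)\,d\sigma$: the contribution of $f(t,\lambda)$ converges to $f(\varphi(\lambda),\lambda)\varphi'(\lambda)$ whenever $\varphi(\lambda)\in E_1$ and $\varphi'(\lambda)$ exists, while the remaining contribution is dominated by $\tfrac1{|\eta|}\iint_{R_\eta}|f'_{[2]}|$ over the thin rectangle $R_\eta$ with $t$-side $[\varphi(\lambda),\varphi(\lambda+\eta)]$ and $\sigma$-side $[\lambda,\lambda+\eta]$. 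This is the \textbf{main obstacle}: the integrand's parameter slides as the $t$-interval shrinks, so one needs a \emph{moving-parameter} Lebesgue differentiation, which I expect to control using the absolute continuity of the set function $A\mapsto\iint_A|f'_{[2]}|$ together with $|\varphi(\lambda+\eta)-\varphi(\lambda)|=O(|\eta|)$. Precisely because the Lebesgue points of $f(\cdot,\lambda)$ depend on $\lambda$ and the moving interval is anchored at the endpoint $\varphi(\lambda)$, the conclusion cannot be asserted on all of $[a,b]$; this is exactly why the formula is localized to $\varphi^{-1}(E_1)\cap E_2$, yielding part (a).

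\emph{Parts (b) and (c).} For (b) I would verify the definition of absolute continuity of $F$ directly. Given disjoint intervals $(\alpha_i,\beta_i)$, I split $F(\beta_i)-F(\alpha_i)$ into $\int_{\varphi(\alpha_i)}^{\varphi(\beta_i)}f(t,\beta_i)\,dt$ and $\int_c^{\varphi(\alpha_i)}[f(t,\beta_i)-f(t,\alpha_i)]\,dt$. Bounding $|f(t,\cdot)|$ by the fixed integrable majorant $\bar g(t):=|f(t,a)|+\int_a^b|f'_{[2]}(t,\sigma)|\,d\sigma$ and, crucially, using the \emph{monotonicity} of $\varphi$ so that the image intervals $[\varphi(\alpha_i),\varphi(\beta_i)]$ are essentially disjoint, the first sum is at most $\int_{\bigcup[\varphi(\alpha_i),\varphi(\beta_i)]}\bar g$ and the second at most $\iint_{[c,d]\times\bigcup(\alpha_i,\beta_i)}|f'_{[2]}|$; both are small once $\sum(\beta_i-\alpha_i)$ is small, by absolute continuity of $\varphi$ and of the two integrals. (Monotonicity is indispensable, matching the classical failure of $AC$ under composition with non-monotone $AC$ maps.) For (c) I would split according to the sign of $\varphi'$: on $\{\varphi'>0\}$, strict monotonicity and the change-of-variables identity $\int_{\varphi^{-1}(Z)}\varphi'=|\varphi(\varphi^{-1}(Z))|\le|Z|$ show that $\varphi^{-1}$ carries the null set $Z:=[c,d]\setminus E_1$ into a null set, so (a) already holds a.e.\ there; on $\{\varphi'=0\}$ the claimed formula reduces to $F'(\lambda)=\int_c^{\varphi(\lambda)}f'_{[2]}(t,\lambda)\,dt$, and I would kill the boundary difference quotient by dominating it by $\big|\tfrac1\eta(G(\lambda+\eta)-G(\lambda))\big|$ with $G:=\big(\int_c^{\cdot}\bar g\big)\circ\varphi$ and invoking the Serrin--Varberg chain rule, which gives $G'(\lambda)=\bar g(\varphi(\lambda))\varphi'(\lambda)=0$ a.e.\ on $\{\varphi'=0\}$. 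Combining the two cases upgrades (a) to a.e.\ $\lambda\in[a,b]$, which is (c).
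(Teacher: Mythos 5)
First, a point of comparison that matters for this review: the paper does not prove this lemma at all --- it is quoted from Sremr's article \cite{K} as a known tool --- so there is no in-paper proof to measure your attempt against, and your proposal must stand on its own. Its overall architecture is the standard (and correct) one, and parts (b) and (c) are essentially right as sketched: the direct verification of absolute continuity with the majorant $\bar g(t)=|f(t,a)|+\int_a^b|f'_{[2]}(t,\sigma)|\,d\sigma$ pinpoints exactly where monotonicity enters (essential disjointness of the image intervals), and in (c) your identity $\int_{\varphi^{-1}(Z)}\varphi'=|\varphi(\varphi^{-1}(Z))|\le |Z|$ correctly yields $\varphi'=0$ a.e.\ on $\varphi^{-1}(Z)$ --- note it does \emph{not} make $\varphi^{-1}(Z)$ null (that can fail, e.g.\ when $\varphi'$ vanishes on a fat Cantor set), but your split into $\{\varphi'>0\}$ and $\{\varphi'=0\}$ uses only the true statement, and the Serrin--Varberg chain rule disposes of the boundary term on $\{\varphi'=0\}$.

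Two steps in part (a), however, have genuine gaps. (i) You dismiss the fixed-limit term with ``converges by Step 1,'' but Step 1 gives $\partial_\lambda\Psi(\mu,\lambda)=H(\mu,\lambda):=\int_c^{\mu}f'_{[2]}(t,\lambda)\,dt$ only for a.e.\ $\lambda$ \emph{for each fixed} $\mu$, with the exceptional null set depending on $\mu$; in your difference quotient the relevant value is $\mu=\varphi(\lambda)$, which moves with $\lambda$, so the union of exceptional sets is a priori not null. You must build one full-measure set $E_2$ via a countable dense family $\{\mu_k\}$, using $|H(\mu,\sigma)-H(\mu_k,\sigma)|\le\int_{\mu\wedge\mu_k}^{\mu\vee\mu_k}|f'_{[2]}(t,\sigma)|\,dt$ and Lebesgue points of the countably many auxiliary functions to pass to arbitrary $\mu$. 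The same defect infects your $E_1$: ``Lebesgue points of the relevant slices'' depend on $\lambda$, while the lemma asserts a single fixed $E_1\subseteq[c,d]$; the repair is a transfer over a countable dense set of parameter values using the majorant $t\mapsto\int_a^b|f'_{[2]}(t,\sigma)|\,d\sigma\in L([c,d])$. (ii) In the thin-rectangle remainder your stated tool is insufficient: absolute continuity of $A\mapsto\iint_A|f'_{[2]}|$ only gives $\iint_{R_\eta}|f'_{[2]}|\to 0$, whereas after dividing by $|\eta|$ you need it to be $o(|\eta|)$. The working control is again a Lebesgue-point argument: since $|\varphi(\lambda+\eta)-\varphi(\lambda)|=O(|\eta|)$ at points where $\varphi'(\lambda)$ exists, dominate the remainder by $\frac{1}{|\eta|}\int_\lambda^{\lambda+\eta}\bigl(\int_B|f'_{[2]}(t,\sigma)|\,dt\bigr)\,d\sigma$ for a fixed small interval $B\ni\varphi(\lambda)$ taken from a countable base, let $\eta\to 0$ at a Lebesgue point of $\sigma\mapsto\int_B|f'_{[2]}(t,\sigma)|\,dt$, and then shrink $B$, using $f'_{[2]}(\cdot,\lambda)\in L([c,d])$ for a.e.\ $\lambda$ (Fubini). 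With these two repairs --- both instances of the same countable-dense-family device --- your outline becomes a complete proof along the lines of \cite{K}.
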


\section{Green's function}
When $h$ is continuous, it is well known about the Green's function related to problem $\eqref{eq:::1.1}$ by Lemma 2.3 in \cite{E} summarized by the following remark.
\begin{remark}\label{rmk3.1}(\cite{E})
Assume that $h \in C[0,1]$ and {$1<\alpha\leq2$}. Then
the unique solution of
\begin{equation}\label{eq3.1}
\begin{cases}
 D^{\alpha}_{0+}u(t)+h(t)= 0,\quad t\in (0,1),\\
u(0)= 0 = u(1)
\end{cases}
\end{equation}
 can be represented by
$$u(t)={ \int_{0}^{1} G(t,s)h(s)ds,}$$
where $G (t,s)$ is given in \eqref{eq:::1.2}.
\end{remark}
Let us consider the case if $h$ is singular at $t=0$ so that it is not integrable near $t=0$. To find the Green's function, Bai and L\"u \cite{E} take the Riemann-Liouville fractional integral $I^{\alpha}_{0+}$ on both sides of \eqref{eq3.1} for continuous case. 
However, when we consider, for example, $h(t)=t^{-1.5}$, we cannot derive $G(t,s)$ along with the idea of Bai and L\"u \cite{E}, 
since $I^\alpha_{0+} h(t)=\frac{1}{\Gamma (\alpha)}\int_0^t (t-s)^{\alpha -1}s^{-1.5} ds$ is not well-defined. Therefore, we need to try some other approach for singular case. Before we find the Green's function, we consider the definition of a solution for the equation \eqref{eq3.1} when $h$ is continuous and $h$ has singularity at 0. Now we give a lemma related to Green's function for singular case.
We note that Green's function for singular case derived in the following lemma has the same expression as continuous case.
\begin{theorem}\label{lemma3.2}
Assume $g$ satisfies $(H)$, then the following equation 
\begin{equation*}\tag*{($FDE_1)$}\label{P_1}
\begin{cases}
 D^{\alpha}_{0+}u(t)+g(t)= 0,\quad t\in (0,1),\\
u(0)= 0 = u(1),
\end{cases}
\end{equation*}
is equivalent to the functional integral equation: 
\begin{equation}\label{integ}
u(t)={ \int_{0}^{1} G (t,s)g(s)ds, }
\end{equation}
where $G (t,s)$ is given in \eqref{eq:::1.2}. Moreover, $u$ in \eqref{integ} are in $AC[0,1]\cap E_\alpha $ and $D^{\alpha-1}_{0+}u$ is absolutely continuous in any compact subinterval of $(0,1)$.
\end{theorem}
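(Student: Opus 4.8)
The plan is to prove the two implications separately, treating the representation \eqref{integ} as the candidate solution whose regularity I derive by hand, and to obtain the reverse implication from a uniqueness argument. First I would record the easy facts. Since $G(0,s)=0$ and $G(1,s)=0$ for every $s\in[0,1]$, the boundary conditions $u(0)=u(1)=0$ hold automatically. For well-definedness the only possible singularity of the integrand is at $s=0$ (as $g\in C(0,1]$), and the cancellation built into the Green's function gives, for $0\le s\le t\le1$, a bound of the form $0\le G(t,s)\le C(t)\,s\le C(t)\,s^{\alpha-1}$ for small $s$, using $s\le s^{\alpha-1}$ on $(0,1]$ when $1<\alpha\le2$; hence $G(t,s)g(s)\le C(t)\,s^{\alpha-1}g(s)$ is integrable by $(H)$ and $u(t)$ is finite. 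The crude bound $G(t,s)\le\frac{1}{\Gamma(\alpha)}t^{\alpha-1}(1-s)^{\alpha-1}$ together with a tail estimate of $\int_0^t s^{\alpha-1}g(s)\,ds$ and $\int_t^1 s^{\alpha-1}g(s)\,ds$ then yields $u\in C[0,1]$ (including $u(t)\to0=u(0)$ as $t\to0^+$).

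For the forward implication I would pass through $D^{\alpha-1}_{0+}=\frac{d}{dt}\,I^{2-\alpha}_{0+}$ and $D^{\alpha}_{0+}=\frac{d}{dt}\,D^{\alpha-1}_{0+}$. Computing the fractional integral of the kernel first: by Tonelli (every factor is nonnegative) one interchanges $I^{2-\alpha}_{0+}$ with the $s$-integral, and the power-function rule gives $I^{2-\alpha}_{0+}\tau^{\alpha-1}=\Gamma(\alpha)t$ and $I^{2-\alpha}_{0+}(\tau-s)^{\alpha-1}_{+}=\Gamma(\alpha)(t-s)_{+}$, so that
$$I^{2-\alpha}_{0+}u(t)=\int_0^1\big[(1-s)^{\alpha-1}t-(t-s)_{+}\big]g(s)\,ds.$$
I would then differentiate in $t$, splitting the integral at $s=t$ and applying the Leibniz-type rule of Lemma \ref{lm2.7} (with $\varphi(\lambda)=\lambda$, strictly monotone) to the variable-limit piece. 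The boundary contributions $t(1-t)^{\alpha-1}g(t)$ cancel, leaving the explicit formula
$$D^{\alpha-1}_{0+}u(t)=\int_0^t\big[(1-s)^{\alpha-1}-1\big]g(s)\,ds+\int_t^1(1-s)^{\alpha-1}g(s)\,ds,\qquad t\in(0,1).$$
From this one reads off everything: differentiating once more the $(1-t)^{\alpha-1}g(t)$ terms cancel and give $D^{\alpha}_{0+}u(t)=-g(t)$ on $(0,1)$; on every $[a,b]\subset(0,1)$ one has $D^{\alpha-1}_{0+}u(t)=D^{\alpha-1}_{0+}u(a)-\int_a^t g$, absolutely continuous because $g\in C[a,b]$; and a tail estimate based on $(H)$ shows $t^{\alpha-1}D^{\alpha-1}_{0+}u(t)\to0$ as $t\to0^+$ while it is clearly continuous on $(0,1]$, so $t^{\alpha-1}D^{\alpha-1}_{0+}u\in C[0,1]$ and $u\in E_\alpha$.

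To get $u\in AC[0,1]$ I would differentiate the Green's representation itself (again keeping the singular terms combined and justified through Lemma \ref{lm2.7}) and show $\int_0^1|u'(t)|\,dt<\infty$: after a Tonelli interchange the cancellations reduce every contribution to a multiple of $\int_0^1 s^{\alpha-1}g(s)\,ds$, finite by $(H)$, and then $u(t)=u(0)+\int_0^t u'$. For the converse, suppose $u$ is any solution of $(FDE_1)$ in the stated class and set $v:=u-\int_0^1 G(\cdot,s)g(s)\,ds$; then $D^{\alpha}_{0+}v=0$ with $v(0)=v(1)=0$ and $v\in C[0,1]$, so Lemma \ref{lem2.4} gives $v=c_1t^{\alpha-1}+c_2t^{\alpha-2}$. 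Boundedness of $v$ together with $v(0)=0$ forces $c_2=0$, and then $v(1)=0$ forces $c_1=0$, so $v\equiv0$ and \eqref{integ} holds.

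I expect the main obstacle to be the rigorous differentiation under the integral sign. Because $g$ need not be integrable near $0$, the obvious way of splitting each integral produces two individually divergent pieces, so the singular terms must be carried together throughout; on top of this the kernel has a kink at $s=t$ with a moving limit of integration. This is precisely the configuration Lemma \ref{lm2.7} is designed to handle, and the technical heart of the argument is the verification of its hypotheses---the $L^1$ control near $s=0$ of the integrand and of its $t$-derivative, which is exactly where $(H)$ is consumed---after which the cancellation of the boundary terms is what makes $D^{\alpha-1}_{0+}u$ come out continuous and the identity $D^{\alpha}_{0+}u=-g$ emerge.
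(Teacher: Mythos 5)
Your proposal is correct, and on the ``verification'' half it follows essentially the paper's own route: to show that $u(t)=\int_0^1G(t,s)g(s)\,ds$ lies in $AC[0,1]\cap E_\alpha$ and solves \ref{P_1}, both you and the paper arrive at the same key identity
\[
D^{\alpha-1}_{0+}u(t)=\int_0^t\big[(1-s)^{\alpha-1}-1\big]g(s)\,ds+\int_t^1(1-s)^{\alpha-1}g(s)\,ds,
\]
justified by Tonelli/Fubini interchanges and Lemma \ref{lm2.7}, with $(H)$ spent on the same $L^1$ estimates, after which $D^{\alpha}_{0+}u=-g$, the interior absolute continuity of $D^{\alpha-1}_{0+}u$, and $u\in AC[0,1]$ all follow; your beta-function evaluation of $I^{2-\alpha}_{0+}$ applied to the kernel is a tidier packaging of the paper's change-of-variables computation, and your tail estimate showing $t^{\alpha-1}D^{\alpha-1}_{0+}u(t)\to0$ as $t\to0^+$ replaces, equivalently, the paper's bounded-variation argument for $t^{\alpha-1}D^{\alpha-1}_{0+}u$. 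The genuine divergence is in the converse direction. The paper never invokes uniqueness: it takes an arbitrary solution $u$ in the stated class and reconstructs the representation from scratch, integrating the equation twice in the ordinary sense (from $t$ to $1$, then from $0$ to $t$ --- this is where the assumed absolute continuity of $D^{\alpha-1}_{0+}u$ on compact subintervals is used, and where $(H)$ makes $\int_t^1 g(s)\,ds$ integrable), obtaining an explicit formula for $I^{2-\alpha}_{0+}u$, applying $D^{2-\alpha}_{0+}$, and fixing the two constants from $u(0)=u(1)=0$; this \emph{derives} the Green's function, which is the paper's point, since $I^{\alpha}_{0+}g$ need not exist for singular $g$. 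You instead set $v=u-\int_0^1G(\cdot,s)g(s)\,ds$, observe $v\in C[0,1]\subset C(0,1)\cap L(0,1)$ and $D^{\alpha}_{0+}v=0$, and invoke Lemma \ref{lem2.4} to get $v=c_1t^{\alpha-1}+c_2t^{\alpha-2}$, killed by boundedness (or $v(0)=0$ when $\alpha=2$) and $v(1)=0$. That argument is sound, shorter, and even hypotheses-lighter --- in this direction you never touch the interior absolute continuity of $D^{\alpha-1}_{0+}u$ --- but it is a verification rather than a derivation: it presupposes the formula for $G$ and rests logically on first establishing that the Green representation solves the problem, which the paper's double-integration argument does not need.
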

\begin{proof}
\noindent
Suppose that $u \in AC[0,1]\cap E_\alpha$ and $D^{\alpha-1}_{0+}u$ is absolutely continuous in any compact subinterval of $(0,1)$. Take $0<t<1$.
 By integrating both sides of equation \ref{P_1} from $t$ to $1$, we have
 $$ -\int_t^{1}D^{\alpha}_{0+}u(s)ds =\int_t^{1}g(s)ds.$$
 From definition of $D^{\alpha}_{0+}u(s)$=$(\frac{d}{ds})^{2}I^{2-\alpha}_{0+}u(s)$, we obtain
\begin{align} \label{(3.2)}
 -c_1+\frac{d}{dt}I^{2-\alpha}_{0+}u(t)=\int_t^{1}g(s)ds, 
 \end{align} 
  where $c_1=\frac{d}{dt}I^{2-\alpha}_{0+}u(t)|_{t=1}$. From the following inequalities, 
\begin{align*}
&\int_0^{t}\int_s^{1}g(\tau)d\tau ds\le \int_0^{1}\int_s^{1}g(\tau)d\tau ds=
\int^1_0 \int_0^\tau ds g(\tau) d\tau\\ 
&= \int^1_0 \tau g(\tau)d\tau \le \int_0^{1}\tau^{\alpha-1} g(\tau)d\tau < \infty,
 \end{align*}
we see as a function of $t,$ $\int_t^{1}g(s)ds \in L(0,1)$ by condition $(H)$. 
Thus we can take the integration $\int_0^{t}$ on both sides of \eqref{(3.2)} and obtain
 \begin{eqnarray*}
 -c_1t-c_2+I^{2-\alpha}_{0+}u(t) &=& \int_0^{t}\int_s^{1}g(\tau)d\tau ds \\
 &=& \int_0^{t}\int_0^{\tau}g(\tau)dsd\tau + \int_t^{1}\int_0^{t}g(\tau)dsd\tau\\
 &=& \int_0^{t}\tau g(\tau)d\tau + \int_t^{1}t g(\tau)d\tau,
 \end{eqnarray*}
 where $c_2=I^{2-\alpha}_{0+}u(t)|_{t=0}$.  
 We have 
 \begin{align*}
 I^{2-\alpha}_{0+}u(t) =c_1t+c_2+ \int_0^{t}\tau g(\tau)d\tau + \int_t^{1}t g(\tau)d\tau.
 \end{align*}
 And by the definition of $D^{\alpha}_{0+}$, we obatin
\begin{align}\label{(3.3)}
D^{2-\alpha}_{0+}I^{2-\alpha}_{0+}u(t)&=\frac{d}{dt}\Big{(}\int_0^t u(s)ds\Big{)}\\&=\frac{c_1}{\Gamma(\alpha)}t^{\alpha-1}+\frac{c_2}{\Gamma(\alpha-1)}t^{\alpha-2}+D^{2-\alpha}_{0+}[\int_0^{t}\tau g(\tau)d\tau + \int_t^{1}t g(\tau)d\tau].
\end{align}
 We calculate the right part of \eqref{(3.3)}.  
 By the definition,
 \begin{align*}
 D^{2-\alpha}_{0+}[\int_0^{t}\tau g(\tau)d\tau + \int_t^{1}t g(\tau)d\tau]&=\frac{1}{\Gamma(\alpha-1)} \frac{d}{dt} \Big{[}\int_{0}^{t}{(t-s)^{\alpha-2}}\int_0^{s}\tau g(\tau)d\tau ds \\&+ \int_{0}^{t}{(t-s)^{\alpha-2}}\int_s^{1}s g(\tau)d\tau ds \Big{]}.
\end{align*}
Thus, we have
\begin{align}
 &\frac{1}{\Gamma(\alpha-1)} \frac{d}{dt} \Big{[}\int_{0}^{t}{(t-s)^{\alpha-2}}\int_0^{s}\tau g(\tau)d\tau ds +\int_{0}^{t}{(t-s)^{\alpha-2}}\int_s^{1}s g(\tau)d\tau ds \Big{]}\nonumber\\
 &=\frac{1}{\Gamma(\alpha-1)} \frac{d}{dt} \Big{[}\int_{0}^{t}\int_\tau^{t}{(t-s)^{\alpha-2}}\tau g(\tau)ds d\tau +\int_{0}^{t}\int_0^{\tau}{(t-s)^{\alpha-2}}s g(\tau)ds d\tau \nonumber\\
 &+ \int_{t}^{1}\int_0^{t}{(t-s)^{\alpha-2}}s g(\tau)ds d\tau \Big{]} \nonumber\\
 &=\frac{d}{dt}\frac{1}{\Gamma(\alpha-1)}\Big[ \int_{0}^{t}\frac{(t-\tau)^{\alpha-1}}{(\alpha-1)}\tau g(\tau)d\tau \nonumber\\&+ \int_{0}^{t}\Big(-\frac{(t-\tau)^{\alpha-1}}{(\alpha-1)}\tau - \frac{(t-\tau)^{\alpha}}{(\alpha-1)\alpha}+\frac{t^{\alpha}}{\alpha(\alpha-1)}\Big{)} g(\tau)d\tau +\int_t^1 \frac{t^{\alpha}}{\alpha(\alpha-1)} g(\tau)d\tau \nonumber\\
 &=\frac{d}{dt}\frac{1}{\Gamma(\alpha-1)}\Big[ \int_{0}^{t}\Big(\frac{t^{\alpha}}{\alpha(\alpha-1)} - \frac{(t-\tau)^{\alpha}}{(\alpha-1)\alpha}\Big{)} g(\tau)d\tau +
 \int_t^1 \frac{t^{\alpha}}{\alpha(\alpha-1)} g(\tau)d\tau \label{(3.4)}
  \end{align}
  We note that the integration $\int_{0}^{t}\Big(\frac{t^{\alpha}}{\alpha(\alpha-1)} - \frac{(t-\tau)^{\alpha}}{(\alpha-1)\alpha}\Big{)} g(\tau)d\tau$ is well-defined, indeed
  \begin{align}
  \label{pro}
  \int_{0}^{t}\Big(\frac{t^{\alpha}}{\alpha(\alpha-1)} - \frac{(t-\tau)^{\alpha}}{(\alpha-1)\alpha}\Big{)} g(\tau)d\tau &= \int_{0}^{t}\int^t_{t-\tau}\frac{s^{\alpha -1}}{\alpha -1}dsg(\tau)d\tau \\
  &\leq \int_{0}^{t} \frac{t^{\alpha-1}}{\alpha -1}\int^t_{t-\tau}dsg(\tau)d\tau \\
  &=\int_{0}^{t} \frac{t^{\alpha-1}}{\alpha -1} \tau g(\tau) d\tau <\infty .
  \end{align}
Hence, we get by Lemma \ref{lm2.7}
  \begin{eqnarray*}
  & &\frac{1}{\Gamma(\alpha-1)} \frac{d}{dt} \Big{[}\int_{0}^{t}{(t-s)^{\alpha-2}}\int_0^{s}\tau g(\tau)d\tau ds +\int_{0}^{t}{(t-s)^{\alpha-2}}\int_s^{1}s g(\tau)d\tau ds \Big{]}\\
 &=& \int_{0}^{t}\frac{t^{\alpha-1}-(t-\tau)^{\alpha-1}}{\Gamma(\alpha)}g(\tau)d\tau+\int_{t}^1\frac{ t^{\alpha-1}}{\Gamma(\alpha)}g(\tau)d\tau.
 \end{eqnarray*}
By the similar argument in \eqref{pro}, we can get the fact that
{\footnotesize
\begin{align}\label{eq33}
 & \int_{0}^{t}\frac{t^{\alpha-1}-(t-\tau)^{\alpha-1}}{\Gamma(\alpha)}g(\tau)d\tau =\int_{0}^{t}\frac{\alpha -1}{\Gamma(\alpha)}\int_{t-\tau}^t s^{\alpha -2}ds g(\tau)d\tau 
 \leq \int_{0}^{t}\frac{\alpha -1}{\Gamma(\alpha)} (t-\tau)^{\alpha -2}\tau g(\tau)d\tau \\
  &\leq \int_{0}^{\frac{t}{2}}\frac{\alpha -1}{\Gamma(\alpha)} (t-\tau)^{\alpha -2}\tau g(\tau)d\tau +\int_{\frac{t}{2}}^t \frac{\alpha -1}{\Gamma(\alpha)} (t-\tau)^{\alpha -2}\tau g(\tau)d\tau 
 < \infty .
 \end{align}
}  
The last inequality is valid. Indeed, 
$a(\tau)\triangleq (t-\tau)^{\alpha-2}$ is continuous at $\tau \in [0, \frac{t}{2}]$ and 
$\tau g(\tau)$ is integrable on $(0, \frac{t}{2}]$ by $(H)$. 
Moreover, $a(\tau)$ is integrable on $[\frac{t}{2},t)$ and 
$\tau g(\tau)$ is bounded on $[\frac{t}{2},t].$
Hence, we can rewrite the equation \eqref{(3.3)} as follows
\begin{eqnarray*}
&&u(t)-\frac{c_1}{\Gamma(\alpha)}t^{\alpha-1}-\frac{c_2}{\Gamma(\alpha-1)}t^{\alpha-2}\\
&=&\int_{0}^{t}\frac{t^{\alpha-1}-(t-\tau)^{\alpha-1}}{\Gamma(\alpha)}g(\tau)d\tau+\int_{t}^1\frac{ t^{\alpha-1}}{\Gamma(\alpha)}g(\tau)d\tau.
\end{eqnarray*}
From $u(0)=u(1)=0$, we have the following condition
 \begin{eqnarray*}
& &  \frac{c_2}{\Gamma(\alpha-1)}=0,\\
&-& \frac{c_1}{\Gamma(\alpha)}= \int_{0}^{1}\frac{1-(1-\tau)^{\alpha-1}}{\Gamma(\alpha)}g(\tau)d\tau.
 \end{eqnarray*}
 The second equality is valid using \eqref{eq33} with $t=1$.
Plugging boundary conditions in the equations, we have
 \begin{align*}
 u(t)&+\int_{0}^{1}\frac{t^{\alpha-1}-(t(1-\tau))^{\alpha-1}}{\Gamma(\alpha)}g(\tau)d\tau \\
 &=\int_{0}^{t}\frac{t^{\alpha-1}-(t-\tau)^{\alpha-1}}{\Gamma(\alpha)}g(\tau)d\tau+\int_{t}^1\frac{ t^{\alpha-1}}{\Gamma(\alpha)}g(\tau)d\tau .
 \end{align*} Therefore, we have 
  \begin{align*}
 u(t)&=\int_{0}^{t} \frac{(t(1-\tau))^{\alpha-1}-(t-\tau)^{\alpha-1}}{\Gamma(\alpha)}g(\tau)d\tau\\&+\int_t^1 \frac{(t(1-\tau))^{\alpha-1}}{\Gamma(\alpha)}g(\tau)d\tau .
 \end{align*}
 and then we can conclude that the solution of \ref{P_1} satisfies the function integral equation \eqref{integ}.
  Now, let us take $u$ as follows:
  $$
u(t)={ \int_{0}^{1} G (t,s)g(s)ds, }
$$
where $g$ satisfies $(H1), (H2).$
We want to show that $u \in AC[0,1]\cap E_\alpha[0,1]$ and 
$$D^\alpha_{0+}(u)(t)+g(t)=0, ~t\in (0,1).$$
First, we will prove $u\in AC[0,1].$ Since $\lim_{t\rightarrow 0} (u)(t)=\lim_{t\rightarrow 1} (u)(t)=0$, $u(t)$ is continuous. Next, we have
\begin{align*}
  u'(t)&=\int_{0}^{t} \frac{t^{\alpha-2}(1-\tau)^{\alpha-1}-(t-\tau)^{\alpha-2}}{\Gamma(\alpha -1)}g(\tau)d\tau+\int_t^1 \frac{t^{\alpha-2}(1-\tau)^{\alpha-1}}{\Gamma(\alpha)}g(\tau)d\tau \\
  &=t^{\alpha -2}\Big{[}\int_{0}^{t} \frac{(1-\tau)^{\alpha-1}-(1-\frac{\tau}{t})^{\alpha-2}}{\Gamma(\alpha -1)}g(\tau)d\tau+\int_t^1 \frac{(1-\tau)^{\alpha-1}}{\Gamma(\alpha)}g(\tau)d\tau\Big].
  \end{align*}
In order to prove $\int_0^1 |u'(t)| dt <\infty$, we need to check the integrability of some functions. For fixed $t\in (0,1],$ let us consider a function ${\eta}_1$ defined by 
$$\eta _1 (s)= \frac{(t-s)^{\alpha-2}-t^{\alpha-2}(1-s)^{\alpha-1}}{s^{\alpha-1}},  $$
for $s\in (0, \frac{t}{2}].$
The numerator of ${\eta}_1$ satisfies the following inequality
\begin{eqnarray*}
& &(t-s)^{\alpha-2}-t^{\alpha-2}(1-s)^{\alpha-1}\\
&= & (t-s)^{\alpha-2}-t^{\alpha-2}(1-s)^{\alpha-2} (1-s) \\
&= & (t-s)^{\alpha-2}-t^{\alpha-2}(1-s)^{\alpha-2}+s t^{\alpha-2} (1-s)^{\alpha-2}\\
&= & \int_{t-s}^{t(1-s)} (2-\alpha) {\tau}^{\alpha -3} d\tau + s t^{\alpha-2} (1-s)^{\alpha-2}.
\end{eqnarray*}
Since $-2<\alpha -3 \le -1$ and $ 0<t-s\le t(1-s),$ for  $\tau \in [t-s, t(1-s)],$ we see
$\tau^{\alpha -3} \le {(t-s)^{\alpha -3}}.$ Thus
$$\int_{t-s}^{t(1-s)} (2-\alpha) {\tau}^{\alpha -3} d\tau \le (2-\alpha){(t-s)^{\alpha -3}} s(1-t)$$
and 
\begin{eqnarray*}
{\rm the \ numerator} \ &\le & (2-\alpha){(t-s)^{\alpha -3}} s(1-t)+ s t^{\alpha-2} (1-s)^{\alpha-2}\\
&=& s[(2-\alpha){(t-s)^{\alpha -3}} (1-t)+  t^{\alpha-2} (1-s)^{\alpha-2}].
\end{eqnarray*}
As a function of $s$ on $[0, \frac{t}{2}],$ the inside of the above bracket is continuous, so bounded by say, $A_t$ for all $s\in [0, \frac{t}{2}].$ Therefore
$$0\le {\eta_1}(s) \le {\frac{A_t s }{s^{\alpha -1 } }}= A_t s^{2-\alpha},$$
for all $s\in [0, \frac{t}{2}].$ 
Since $\alpha \in (1,2)$, this implies that ${\eta_1}(s) \to 0$ as $s\to 0$ and
 ${\eta_1}$ is continuous on $[0, \frac{t}{2}].$
Define a function $\eta_2$ given as 
$$\eta_2 = (t-s)^{\alpha-2}-t^{\alpha-2}(1-s)^{\alpha-1}.$$
Then $\eta_2 (s)$ is integrable on $({\frac{t }{2 }}, t).$ First, we check that $u'(t)$ is well-defined. 
\begin{eqnarray*}
&&|u'(t)|\\ 
&\le&  \int_{0}^{t} \frac{(t-s)^{\alpha-2}-t^{\alpha-2}(1-s)^{\alpha-1}}{\Gamma(\alpha-1)}g(s)ds +\int_{t}^{1}\frac{t^{\alpha-2}(1-s)^{\alpha-1}}{\Gamma(\alpha-1)}g(s)ds \\ 
&\leq & {\frac {1}{\Gamma(\alpha-1)}} \Big [\int_{0}^{t} ((t-s)^{\alpha-2}-t^{\alpha-2}(1-s)^{\alpha-1})g(s)ds+\int_{t}^{1}t^{\alpha-2}(1-s)^{\alpha-1}g(s)ds\Big]\\
&=& {\frac {1}{\Gamma(\alpha-1)}} \Big [ \int_0^{\frac{t }{ 2}} \frac{(t-s)^{\alpha-2}-t^{\alpha-2}(1-s)^{\alpha-1}}{s^{\alpha-1}} s^{\alpha-1}g(s)ds\\
& & \ \ \ \ \ + \int_{\frac{t }{ 2}}^t ((t-s)^{\alpha-2}-t^{\alpha-2}(1-s)^{\alpha-1})g(s)ds + \int_{t}^{1}t^{\alpha-2}(1-s)^{\alpha-1}g(s)ds\Big]\\
&=& {\frac {1}{\Gamma(\alpha-1)}} \Big [ \int_0^{\frac{t }{ 2}} {\eta_1}(s) s^{\alpha-1}g(s)ds
+ \int_{\frac{t }{ 2}}^t {\eta_2 }(s) g(s)ds+ \int_{t}^{1}t^{\alpha-2}(1-s)^{\alpha-1}g(s)ds\Big].
\end{eqnarray*}
First integration in the above bracket is well-defined, since $\eta_1$ is continous and 
$s^{\alpha-1}g(s)$ is integrable by $(H).$ The second is also well-defined, since $\eta_2$ and $h$ are integrable. The third is obviously well-defined, since $g$ is integrable on $[t,1].$

Now, we show $u' \in L^1 (0,1)$. Indeed,
\begin{eqnarray*}
& & \int_{0}^{1}|u'(t)|dt \leq  \Big [\int_{0}^{1}\int_{0}^{t}\frac{(t-s)^{\alpha-2}-t^{\alpha-2}(1-s)^{\alpha-1}}{\Gamma(\alpha-1)}g(s)dsdt\\
& & \ \ \ \ \ +\int_{0}^{1}\int_{t}^{1}\frac{t^{\alpha-2}(1-s)^{\alpha-1}}{\Gamma(\alpha)}g(s)_2dsdt\Big]\\
&\leq & \Big [\int_{0}^{1}\int_{s}^{1}\frac{(t-s)^{\alpha-2}-t^{\alpha-2}(1-s)^{\alpha-1}}{\Gamma(\alpha-1)}g(s)dtds\\
& & \ \ \ \ \ +\int_{0}^{1}\int_{0}^{s}\frac{t^{\alpha-2}(1-s)^{\alpha-1}}{\Gamma(\alpha-1)}g(s)dtds\Big]\\
&\leq & \int_{0}^{1}\frac{s^{\alpha-1}(1-s)^{\alpha-1}}{\Gamma(\alpha)}g(s)ds+\int_{0}^{1}\frac{s^{\alpha-1}(1-s)^{\alpha-1}}{\Gamma(\alpha)}g(s)ds\\
&=&2\int_{0}^{1}\frac{s^{\alpha-1}(1-s)^{\alpha-1}}{\Gamma(\alpha)}g(s)ds <\infty,
\end{eqnarray*}
by condition $(H)$. From this inequality, we can conclude $u(t)\in AC[0,1]$. By the definition, we have 
  \begin{align*}
  D^{\alpha-1}_{0+} u(t)&=\frac{1}{\Gamma(2-\alpha)} (\frac{d}{dt})\int_0^t (t-s)^{1-\alpha}u(s)ds\\
  &=\frac{1}{\Gamma(2-\alpha)} (\frac{d}{dt})[\int_0^t\int_0^s(t-s)^{1-\alpha} \frac{(s(1-\tau))^{\alpha-1}-(s-\tau)^{\alpha-1}}{\Gamma(\alpha)}g(\tau)d\tau ds \\
  &+\int_0^t\int_s^1 (t-s)^{1-\alpha} \frac{(s(1-\tau))^{\alpha-1}}{\Gamma(\alpha)}g(\tau)d\tau ds]\\
  & =\frac{1}{\Gamma(2-\alpha)} (\frac{d}{dt})[\int_0^t\int_\tau^t (t-s)^{1-\alpha} \frac{(s(1-\tau))^{\alpha-1}-(s-\tau)^{\alpha-1}}{\Gamma(\alpha)}g(\tau)ds d\tau \\
&  +\int_0^t\int_0^\tau (t-s)^{1-\alpha} \frac{(s(1-\tau))^{\alpha-1}}{\Gamma(\alpha)}g(\tau)ds d\tau \\ 
&  +\int_t^1\int_0^t (t-s)^{1-\alpha} \frac{(s(1-\tau))^{\alpha-1}}{\Gamma(\alpha)}g(\tau)ds d\tau ]\\
 &=\frac{1}{\Gamma(2-\alpha)\Gamma(\alpha)} (\frac{d}{dt})[\int_0^t\int_0^1 (1-z)^{1-\alpha}t (z(1-\tau))^{\alpha-1}g(\tau)dz d\tau \\&-\int_0^t\int_0^1 (1-z)^{1-\alpha}z^{\alpha-1}(t-\tau)g(\tau)dzd\tau \\
& + \int_t^1\int_0^1 (1-z)^{1-\alpha}t (z(1-\tau))^{\alpha-1}g(\tau)dz d\tau].
  \end{align*}
  And by the simple calculation in \cite{E}, we get 
   \begin{align*}
  D^{\alpha-1}_{0+} u(t)&=\frac{1}{\Gamma(2-\alpha)\Gamma(\alpha)} (\frac{d}{dt})[\int_0^t\int_0^1 (1-z)^{1-\alpha}t (z(1-\tau))^{\alpha-1}g(\tau)dz d\tau \\&-\int_0^t\int_0^1 (1-z)^{1-\alpha}z^{\alpha-1}(t-\tau)g(\tau)dzd\tau \\
& + \int_t^1\int_0^1 (1-z)^{1-\alpha}t (z(1-\tau))^{\alpha-1}g(\tau)dz d\tau]\\
&=\frac{\Gamma(2-\alpha)\Gamma(\alpha)}{\Gamma(2-\alpha)\Gamma(\alpha)} (\frac{d}{dt})\Big{[}\int_0^t [t(1-\tau)^{\alpha -1}-(t-\tau)]g(\tau) d\tau  + \int_t^1t (1-\tau)^{\alpha-1}g(\tau)d\tau \Big{]}\\
=& \frac{\Gamma(2-\alpha)\Gamma(\alpha)}{\Gamma(2-\alpha)\Gamma(\alpha)}[\int_0^t ((1-\tau)^{\alpha-1}-1)g(\tau)d\tau +\int_t^1 (1-\tau)^{\alpha-1}g(\tau)d\tau]\\
&=\int_0^t ((1-\tau)^{\alpha-1}-1)g(\tau)d\tau+\int_t^1 (1-\tau)^{\alpha-1}g(\tau)d\tau.
    \end{align*}
        By the inequality \eqref{eq33}, we get the fact that $((1-\tau)^{\alpha-1}-1)g(\tau) \in L^1(0,t)$ for $t\in (0,1)$ and $ (1-\tau)^{\alpha-1}g(\tau) \in L^1(t,1)$ for $t\in (0,1)$.  Therefore, we have 
         \begin{align}\label{Dalpham1}
  D^{\alpha-1}_{0+} u(t)=\int_0^t ((1-\tau)^{\alpha-1}-1)g(\tau)d\tau+\int_t^1 (1-\tau)^{\alpha-1}g(\tau)d\tau.
    \end{align}
        From \eqref{Dalpham1} and Lemma \ref{lm2.7}, we get
        \begin{align*}
     D^{\alpha}_{0+} u(t)&=\frac{d}{dt}D^{\alpha-1}_{0+} u(t)=\frac{d}{dt}(\int_0^t ((1-\tau)^{\alpha-1}-1)g(\tau)d\tau +\int_t^1 (1-\tau)^{\alpha-1}g(\tau)d\tau\\
     &=((1-t)^{\alpha-1}-1)g(t)-(1-t)^{\alpha-1}g(t)=-g(t)
            \end{align*}
            and therefore
           \begin{align}\label{Dalpha}
           D^\alpha_{0+} u(t)+g(t)=0, ~~ t\in(0,1).
           \end{align}
     Now we want to show that $u\in E_\alpha$, i.e., $t^{\alpha-1}D^{\alpha-1}_{0+} u(t) \in C[0,1]$ . 
     From \eqref{Dalpham1} and \eqref{Dalpha}, we have
    \begin{align*}
    &\int_0^1 |[t^{\alpha-1}D^{\alpha-1}_{0+} u(t)]'|dt= \int_0^1 |[t^{\alpha-2}D^{\alpha-1}_{0+} u(t)+t^{\alpha-1}D^{\alpha}_{0+} u(t)]|dt\\
   &=\int_0^1 \int_0^t t^{\alpha-2}(1-(1-\tau)^{\alpha-1})g(\tau)d\tau dt+\int_0^1\int_t^1 t^{\alpha-2}(1-\tau)^{\alpha-1}g(\tau)d\tau dt\\
   &+\int_0^1 -t^{\alpha-1}g(t)dt\\
   &\le \frac{1}{\alpha-1}[\int_0^t (1-(1-\tau)^{\alpha-1})g(\tau)d\tau+\int_t^1 (1-\tau)^{\alpha-1}g(\tau)d\tau ]\\
   &+\int_0^1 -t^{\alpha-1}g(t)dt<\infty.
       \end{align*}
       We can conclude that $t^{\alpha-1}D^{\alpha-1}_{0+} u(t)$ is absolutely continuous and therefore  $u(t)\in E_\alpha$. 
\end{proof}
\begin{example}
We examine the graph of numerical solution for the following problem
\begin{align*}
\begin{cases}
 D^{\alpha}_{0+}u(t)+h(t)= 0,\quad \alpha=1.6\\
u(0)= 0 = u(1),
\end{cases}
\end{align*}
where $f(t,u)=h(t).$ We choose $h(t)=t^{0.6}, t^{0}, t^{-0.6}$, and $t^{-1.2}$ and graph $u(t)$ using Green's function $G(t,s)$ defined by \eqref{eq:::1.2}. Figure \ref{f1} shows that the solution $u(t)$ is continuous when $h(t)=t^{-1.2}$ which has a singularity at 0. However, the slope of $u$ goes to infinity as $t$ goes to zero.
\begin{figure}[h]
\advance\leftskip 2cm
\includegraphics[width=9cm]{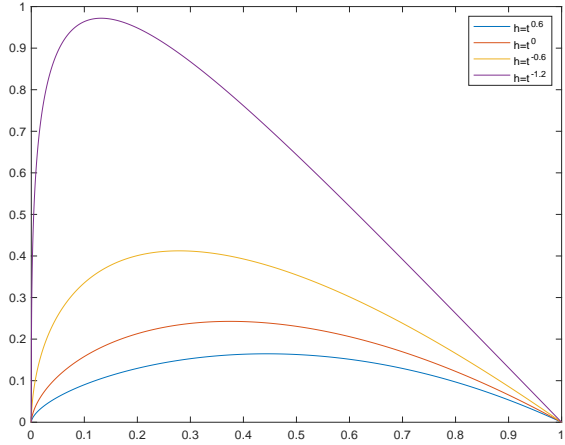}
\caption{Graph of $u(t)$ when $h(t)=t^{0.6}, t^{0}, t^{-0.6}$, and $t^{-1.2}$ and $\alpha=1.6 .$}\label{f1}
\end{figure}
\end{example}
\newpage
\appendix
\section{$C^1_{2-\alpha}[0,1]$-Regularity}
In this section, we consider the regularity of our problem when $h$ is continuous or integrable. 
Let us assume that $h$ is continuous. In \cite{E}, the researchers proved that for given $h\in C[0,1]$ and $1<\alpha \le 2,$ the unique solution of 
\begin{equation*}
\begin{cases}
 D^{\alpha}_{0+}u(t)+h(t)= 0,\quad t\in (0,1),\\
u(0)= 0 = u(1),
\end{cases}
\end{equation*}
 is
\begin{align}\label{solint}
u(t)=\int_0^1 G(t,s)h(s)ds,
\end{align}
where $G(t,s)$ is given in \eqref{eq:::1.2}. We want to prove that $u$ in \eqref{solint} is in $C^1_{2-\alpha}[0,1]$, i.e. $t^{2-\alpha}u'(t) \in C[0,1].$
By Lemma \ref{lm2.7}, we get
\begin{align*}
u'(t)&=\frac{d}{dt}\Big{(}\int_0^1 G(t,s)h(s)ds\Big{)}\\
&= \int_0^t \frac{(\alpha -1)}{\Gamma(\alpha)}(t^{\alpha-2}(1-s)^{\alpha-1}-(t-s)^{\alpha-2})h(s)ds\\
&~~+\int_t^1 \frac{(\alpha -1)}{\Gamma(\alpha)} t^{\alpha-2}(1-s)^{\alpha-1}h(s)ds.
\end{align*}
So, we have
\begin{align*}
t^{2-\alpha}u'(t)&= \int_0^t \frac{(\alpha -1)}{\Gamma(\alpha)}((1-s)^{\alpha-1}-(1-\frac{s}{t})^{\alpha-2})h(s)ds\\
&~~+\int_t^1 \frac{(\alpha -1)}{\Gamma(\alpha)} (1-s)^{\alpha-1}h(s)ds.
\end{align*}
Since $h$ is continuous, we get
\begin{align*}
t^{2-\alpha}u'(t)&= \int_0^t \frac{(\alpha -1)}{\Gamma(\alpha)}((1-s)^{\alpha-1}-(1-\frac{s}{t})^{\alpha-2})h(s)ds\\
&~~+\int_t^1 \frac{(\alpha -1)}{\Gamma(\alpha)} (1-s)^{\alpha-1}h(s)ds\\
&\le M [\int_0^t \frac{(\alpha -1)}{\Gamma(\alpha)}((1-s)^{\alpha-1}-(1-\frac{s}{t})^{\alpha-2})ds\\
&~~+\int_t^1 \frac{(\alpha -1)}{\Gamma(\alpha)} (1-s)^{\alpha-1}ds]<\infty .
\end{align*}

Second, we suppose that $h\in L^1(0,1)\cap C(0,1).$ With the similar technique to the proof of Lemma 2.3 in \cite{E}, we can conclude that 
\begin{align}
u(t)=\int_0^1 G(t,s)h(s)ds.
\end{align}
Hence, we have
\begin{align*}
t^{2-\alpha}u'(t)&= \int_0^t \frac{(\alpha -1)}{\Gamma(\alpha)}((1-s)^{\alpha-1}-(1-\frac{s}{t})^{\alpha-2})h(s)ds\\
&~~+\int_t^1 \frac{(\alpha -1)}{\Gamma(\alpha)} (1-s)^{\alpha-1}h(s)ds\\
&= \int_0^{t/2} \frac{(\alpha -1)}{\Gamma(\alpha)}((1-s)^{\alpha-1}-(1-\frac{s}{t})^{\alpha-2})h(s)ds\\
&+ \int_{t/2}^t \frac{(\alpha -1)}{\Gamma(\alpha)}((1-s)^{\alpha-1}-(1-\frac{s}{t})^{\alpha-2})h(s)ds\\
&~~+\int_t^1 \frac{(\alpha -1)}{\Gamma(\alpha)} (1-s)^{\alpha-1}h(s)ds.
\end{align*}
By simple calculation, we get the following fact 
\begin{itemize}
 \item[1.] $(1-s)^{\alpha-1}-(1-\frac{s}{t})^{\alpha-2} \in C[0,t/2]$ and $h(s)\in L^1(0,t/2)$,  
\item [2.] $(1-s)^{\alpha-1}-(1-\frac{s}{t})^{\alpha-2} \in L^1(t/2,t)$ and $h(s)\in C[t/2, t]$,
\item [3.]  $(1-s)^{\alpha-1}\in C[t,1]$ and $h(s)\in L^1(t,1).$
\end{itemize}
Therefore, we can prove that $t^{2-\alpha}u'(t) \in C[0,1].$
As a results, we can prove that the statement in Conjecture \ref{conjregu} as follows:
\begin{lemma}
Let us consider the following equation
\begin{equation*}
\begin{cases}
 D^{\alpha}_{0+}u(t)+h(t)= 0,\quad t\in (0,1),\\
u(0)= 0 = u(1),
\end{cases}
\end{equation*}
where $D^{\alpha}_{0+}$ is the Riemann-Liouville fractional derivative of order $\alpha \in (1,2]$. If $h(t) \in C[0,1]$ or $L^1(0,1)\cap C(0,1)$, the solution $u$ would be in $C^1_{2-\alpha}[0,1]$. Moreover, if $h(t) \in C(0,1]$ satisfying $(H)$, the solution $u$ would be in $E_\alpha [0,1]$.
\end{lemma}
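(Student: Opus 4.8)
The plan is to treat all three hypotheses through a single representation and then separate the two regularity conclusions. In every case the candidate solution is the one produced by Bai and L\"u, namely $u(t)=\int_0^1 G(t,s)h(s)\,ds$ with $G$ as in \eqref{eq:::1.2}: under condition $(H)$ this representation is exactly what Theorem \ref{lemma3.2} establishes, for $h\in C[0,1]$ it is Remark \ref{rmk3.1}, and for $h\in L^1(0,1)\cap C(0,1)$ it follows by repeating the argument of Lemma~2.3 in \cite{E}. Thus the whole lemma reduces to reading off regularity from this integral formula, and I would organize the work as two independent claims: $C^1_{2-\alpha}$-membership in the first two cases, and $E_\alpha$-membership in the third.

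For the $C^1_{2-\alpha}$ statement I would first differentiate under the integral sign. Applying the Leibniz-type rule of Lemma \ref{lm2.7} to each branch of $G$ and using that $G$ is continuous across the diagonal $s=t$ (so the two boundary contributions cancel), one gets
$$
t^{2-\alpha}u'(t)=\frac{\alpha-1}{\Gamma(\alpha)}\Big[\int_0^t\big((1-s)^{\alpha-1}-(1-\tfrac{s}{t})^{\alpha-2}\big)h(s)\,ds+\int_t^1(1-s)^{\alpha-1}h(s)\,ds\Big].
$$
When $h$ is continuous it is bounded by some $M$, and pulling $M$ out reduces continuity of $t^{2-\alpha}u'$ to continuity of the explicit kernel integrals, which are computed directly. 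When $h\in L^1(0,1)\cap C(0,1)$ the singular factor $(1-\tfrac{s}{t})^{\alpha-2}$ must be handled with care, and I would split the first integral as $\int_0^{t/2}+\int_{t/2}^t$: on $[0,t/2]$ the kernel is continuous in $(t,s)$ and $h\in L^1$; on $[t/2,t]$ the kernel is $L^1$ in $s$ while $h$ is continuous, hence bounded, there; and on $[t,1]$ the kernel $(1-s)^{\alpha-1}$ is continuous while $h\in L^1$. Each pairing is of the form (continuous)$\times$($L^1$) or ($L^1$)$\times$(continuous), and the joint continuity of such parametrized integrals in $t$ then yields $t^{2-\alpha}u'\in C[0,1]$.

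For the last assertion, under condition $(H)$ the conclusion $u\in E_\alpha$ is precisely what Theorem \ref{lemma3.2} proves: that argument already computes $D^{\alpha-1}_{0+}u$ explicitly via \eqref{Dalpham1} and verifies that $t^{\alpha-1}D^{\alpha-1}_{0+}u$ has an integrable derivative, hence is absolutely continuous, so $u\in AC[0,1]\cap E_\alpha$. I would simply invoke that theorem, requiring no new work for this case. The function $u_3$ of Example \ref{ex3.2} confirms that $E_\alpha$ is genuinely larger than $C^1_{2-\alpha}$, so the three cases are not redundant.

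The main obstacle I expect is the endpoint behaviour at $t=0$ in the $L^1$ case: near $s=t$ the factor $(1-\tfrac{s}{t})^{\alpha-2}$ blows up, and one must choose the splitting point $t/2$ so that this singular kernel is always integrated against a locally bounded piece of $h$ and never against the merely-$L^1$ tail. Upgrading from boundedness of $t^{2-\alpha}u'(t)$ to genuine \emph{continuity} at $t=0$ requires a uniform-integrability or dominated-convergence argument for the parametrized integrals over the shrinking domain $[0,t/2]$, together with a careful verification that the differentiation under the integral in Lemma \ref{lm2.7} applies, i.e. that the relevant inner partial derivative is jointly integrable. Everything else is routine kernel bookkeeping.
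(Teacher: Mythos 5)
Your proposal follows essentially the same route as the paper's own proof: the same Green's function representation $u(t)=\int_0^1 G(t,s)h(s)\,ds$ in all three cases, the same differentiation under the integral via Lemma \ref{lm2.7} yielding the identical formula for $t^{2-\alpha}u'(t)$, the same bound by $M$ for continuous $h$, the same splitting $\int_0^{t/2}+\int_{t/2}^t$ with the (continuous)$\times(L^1)$ pairing argument for $h\in L^1(0,1)\cap C(0,1)$, and the same invocation of Theorem \ref{lemma3.2} for the case of $h$ satisfying $(H)$. Your closing remarks about upgrading boundedness to genuine continuity at $t=0$ actually flag a point the paper itself treats only sketchily, so your write-up is, if anything, slightly more careful on that detail.
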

\section*{Acknowledgment}
The authors express their gratitude to anonymous referees for their helpful suggestions which improved final version of this paper.

\section*{Funding}
This work was supported by the National Research Foundation of Korea, Grant funded by the Korea Government (MEST) (NRF2016R1D1A1B04931741).

\section*{Availability of data and materials}
Data sharing not applicable to this article as no datasets were generated or analyzed during the current study.

\section*{Competing interests}
The authors declare that there is no competing interests for this paper.

\section*{Author's contributions}
All authors have equally contributed in obtaining new results in this article and also
read and approved the final manuscript.

\section*{Author's information}
  Jinsil Lee, Department of Mathematics, University of Georgia,  Athens, GA 30606, USA.
   E-mail: jl74942@uga.edu\\
 Yong-Hoon Lee, Department of Mathematics, Pusan National University, Busan 46241, Republic of Korea.
    E-mail: yhlee@pusan.ac.kr

\section*{Publisher's Note}
Springer Nature remains neutral with regard to jurisdictional claims in published maps and institutional affiliations.

\bibliographystyle{plain}

\end{document}